\DeclareMathOperator{\erf}{erf}
\newtheorem{theorem}{Theorem}
\numberwithin{equation}{section}
\begin{document}

\title[{General series identities, some additive theorems on hypergeometric functions}]
{General series identities, some additive theorems on hypergeometric functions and their applications}

\author[M.I. Qureshi, S. Jabee and M. Shadab]{Mohammad Idris Qureshi, Saima Jabee and Mohd Shadab$^{*}$}

\address{Mohammad Idris Qureshi: Department of Applied Sciences and Humanities,
Faculty of Engineering and Technology,
Jamia Millia Islamia (A Central University),
New Delhi 110025, India.}
\email{miqureshi\_delhi@yahoo.co.in}

\address{Saima Jabee: Department of Applied Sciences and Humanities,
Faculty of Engineering and Technology,
Jamia Millia Islamia (A Central University),
New Delhi 110025, India.}
\email{saimajabee007@gmail.com}

\address{Mohammad Shadab: Department of Applied Sciences and Humanities,
Faculty of Engineering and Technology,
Jamia Millia Islamia (A Central University),
New Delhi 110025, India.}
\email{shadabmohd786@gmail.com}

\subjclass[2010]{Primary 33C20, 33EXX, 33BXX; Secondary 11B83.}

\keywords{Fox-Wright hypergeometric function; Generalized hypergeometric function; Fifth roots of unity; Multiple bounded sequences.}

\thanks{*Corresponding author}

\begin{abstract}
Motivated by the substantial development of the special functions,
we contribute to establish some rigorous results on the general series identities with bounded sequences and hypergeometric functions with different arguments, which are generally applicable in nature.
For the application purpose, we apply our results to some functions
e.g. Trigonometric functions, Elliptic integrals, Dilogarithmic function, Error function, Incomplete gamma function,
 and many other special functions.
\end{abstract}

\maketitle

\section{Introduction, Preliminaries and Notations}
In present paper, we shall use the following standard notations:\\
$\mathbb{N} :=\{1,2,3,\ldots \}$,\\
$\mathbb {N}_0:=\{0,1,2,3,\ldots \}=\mathbb{N}\cup \{0\}$,\\
$\mathbb {Z}_0^- :=\{0,-1,-2,-3,\ldots \}$,\\
$\mathbb {Z}^- :=\{-1,-2,-3,\ldots \}=\mathbb {Z}_0^- \backslash \{0\},$ and\\
$\mathbb{Z}=\mathbb {Z}_0^-\cup \mathbb{N}$.\\
Here, as usual, $\mathbb {Z}$ denotes the set of integers, $\mathbb{R}$ denotes the set of real numbers,  $\mathbb{R^+}$ denotes the set of positive real numbers and $\mathbb{C}$ denotes the set of complex numbers.\\
The Pochhammer symbol (or the shifted factorial) $(\lambda)_{\nu}$ $(\lambda,\nu \in \mathbb{C})$ is defined in terms of the familiar Gamma function, by
\begin{equation}\label{h.eq(1.1)}
(\lambda)_{\nu}:=\frac{\Gamma(\lambda+\nu)}{\Gamma(\lambda)}=
\begin{cases}
$1$ & ;(\nu=0;~ \lambda \in \mathbb {C}\backslash \{0\})\\
\lambda (\lambda+1)\ldots (\lambda+n-1) & ;~(\nu=n \in \mathbb {N}; ~\lambda \in \mathbb {C}\backslash {\mathbb{Z}_0^-})\\
\frac{(-1)^{n}k!}{(k-n)!} & ;~(\lambda=-k; ~\nu=n; ~n,k \in \mathbb{N}_0;~ {0}\leq{n}\leq{k})\\
$0$ & ;~(\lambda=-k; ~\nu=n; ~n,k \in \mathbb{N}_0;~{n}>{k})\\
\frac{(-1)^{n}}{(1-\lambda)_n} & ;~(\nu=-n; n\in \mathbb{N}; ~\lambda \neq 0,\pm{1},\pm{2},\ldots),
\end{cases}
\end{equation}
it being understood {\it conventionally} that $(0)_0=1$, and assumed tacitly that the Gamma quotient exists.\\
In the Gaussian hypergeometric series $_2F_1(a,b;c;z)$, there are two numerator parameters $a$, $b$ and one denominator parameter $c$. A natural generalization of this series is accomplished by introducing any arbitrary number of numerator and denominator parameters. The non-terminating hypergeometric series \cite[p.42-43]{Srivastava3}
\begin{equation}\label{h.eq(1.2)}
{_pF_q}\left[\begin{array}{c}\alpha_1,\ldots,\alpha_p;\\\beta_1,\ldots,\beta_q;\end{array}z\right]=\sum_{n=0}^{\infty}\frac{(\alpha_1)_n\ldots(\alpha_p)_n}{(\beta_1)_n\ldots(\beta_q)_n}\frac{z^n}{n!},
\end{equation}
is known as the {\it generalized Gauss} and {\it Kummer series}, or simply, the {\it generalized hypergeometric series}. Here $p$ and $q$ are positive integers or zero (interpreting an empty product as unity), and we assume that the variable $z$, the numerator parameters ${\alpha_1,\ldots,\alpha_p}$ and the denominator parameters ${\beta_1,\ldots,\beta_q}$ take on complex values, provided that
\begin{equation}\label{h.eq(1.3)}
\beta_j\neq0,-1,-2,\ldots; \quad{j=1,\ldots,q}.
\end{equation}
Convergence conditions \cite[p.43]{Srivastava3} for generalized hypergeometric function are as follows:\\
Suppose that none of the numerator parameters is zero or a negative integer (otherwise the question of convergence will not arise), and with the usual restriction \eqref{h.eq(1.3)}, the ${_pF_q}$ series in the definition \eqref{h.eq(1.2)}\\
(i) converges for ${|z|<\infty}$, if $p~{\leqq}~q$,\\
(ii) converges for ${|z|<1}$, if $p~=~q+1$.\\
Furthermore, if we denote
\begin{equation}
\omega=\sum_{j=1}^{q}{\beta_j}-\sum_{j=1}^{p}{\alpha_j},\nonumber
\end{equation}
it is known that the ${_pF_q}$ series, with $p=q+1$, is\\
(a) absolutely convergent for $|z|=1$, if $\Re(\omega)>0$,\\
(b) conditionally convergent for $|z|=1$, $z\neq1$, if $-1<\Re(\omega) \leqq0$.\\
The Fox-Wright psi function of one variable (\cite[p.389]{Fox1}; see also \cite{Fox2,Wright1,Wright2}) is given by
\begin{eqnarray}\label{h.eq(1.4)}
{_p\Psi_q}\left[\begin{array}{c}(\alpha_1,A_1),\ldots,(\alpha_p,A_p);\\(\beta_1,B_1),\ldots,(\beta_q,B_q);\end{array}z\right]
&=&\sum_{n=0}^{\infty}\frac{\Gamma{(\alpha_1+A_1n)}\ldots\Gamma{(\alpha_p+A_pn)}}{\Gamma{(\beta_1+B_1n)}\ldots\Gamma{(\beta_q+B_q n)}}\frac{z^n}{n!}\nonumber\\
&=&\frac{\Gamma{(\alpha_1)}\ldots\Gamma{(\alpha_p)}}{\Gamma{(\beta_1)}\ldots\Gamma{(\beta_q)}}\sum_{n=0}^{\infty}\frac{{(\alpha_1)}_{nA_1}\ldots{(\alpha_p)}_{nA_p}}{{(\beta_1)}_{nB_1}\ldots{(\beta_q)}_{nB_q}}\frac{z^n}{n!},\nonumber\\
\end{eqnarray}
\begin{eqnarray}\label{h.eq(1.5)}
{_p\Psi_q}\left[\begin{array}{c}(\alpha_1,A_1),\ldots,(\alpha_p,A_p);\\(\beta_1,B_1),\ldots,(\beta_q,B_q);\end{array}z\right]&=&\frac{\Gamma{(\alpha_1)}\ldots\Gamma{(\alpha_p)}}{\Gamma{(\beta_1)}\ldots\Gamma{(\beta_q)}}{_p\Psi_q^*}\left[\begin{array}{c}(\alpha_1,A_1),\ldots,(\alpha_p,A_p);\\(\beta_1,B_1),\ldots,(\beta_q,B_q);\end{array}z\right],\nonumber\\
\end{eqnarray}
\begin{eqnarray}\label{h.eq(1.7)}
{_p\Psi_q}\left[\begin{array}{c}(\alpha_1,A_1),\ldots,(\alpha_p,A_p);\\(\beta_1,B_1),\ldots,(\beta_q,B_q);\end{array}z\right]&=&\frac{1}{2\pi\rho}\int_{L}\frac{{\Gamma(\zeta)}\displaystyle\prod_{i=1}^{p}\Gamma{(\alpha_{i}-A_{i}\zeta)}}{\displaystyle\prod_{j=1}^{q}\Gamma{(\beta_{j}-B_{j}\zeta)}}(-z)^{-\zeta}d\zeta,
\end{eqnarray}
where $\rho^2=-1$, $z\in\mathbb{C}$; parameters $\alpha_{i},\beta_{j}\in\mathbb{C}$; coefficients $A_{i},B_{j}\in\mathbb{R}=(-\infty,+\infty)$ in case of series \eqref{h.eq(1.4)} (or $A_{i},B_{j}\in\mathbb{R}_{+}=(0,+\infty)$ in case of contour integral \eqref{h.eq(1.7)}), $A_{i}\neq0~(i=1,2,...,p), B_{j}\neq0~(j=1,2,...,q)$. In equation \eqref{h.eq(1.4)}, the parameters $\alpha_{i},\beta_{j}$ and coefficients $A_{i},B_{j}$ are adjusted in such a way that the product of Gamma functions in numerator and denominator should be well defined \cite{Boersma,Braaksma}.
\begin{eqnarray}
\Delta^{*}=\left( \sum_{j=1}^{q}B_{j}-\sum_{i=1}^{p}A_{i}\right),\\
\delta^{*}=\left(\displaystyle\prod_{i=1}^{p}|A_{i}|^{-A_{i}}\right)\left(\displaystyle\prod_{j=1}^{q}|B_{j}|^{B_{j}}\right),\\
\mu^{*}= \sum_{j=1}^{q}\beta_{j}-\sum_{i=1}^{p}\alpha_{i}+\left(\frac{p-q}{2}
\right),
\end{eqnarray}
and
\begin{eqnarray}
\sigma^{*}=(1+A_{1}+...+A_{p})-(B_{1}+...+B_{q})=1-\Delta^{*}.
\end{eqnarray}
Case(I): When contour (L) is a left loop beginning and ending at $-\infty$, then ${_p}\Psi_{q}$ given by \eqref{h.eq(1.4)} or \eqref{h.eq(1.7)} holds the following convergence conditions\\
i) When $\Delta^{*}>-1, 0<|z|<\infty, z\neq0,$\\
ii) When $\Delta^{*}=-1, 0<|z|<\delta^{*},$\\
iii)When $\Delta^{*}=-1, |z|=\delta^{*}, \text{ and } \Re(\mu^{*})>\frac{1}{2}.$\\
Case(II): When contour (L) is a right loop beginning and ending at $+\infty$, then ${_p}\Psi_{q}$ given by \eqref{h.eq(1.4)} or \eqref{h.eq(1.7)} holds the following convergence conditions\\
i) When $\Delta^{*}<-1, 0<|z|<\infty, z\neq0,$\\
ii) When $\Delta^{*}=-1, |z|>\delta^{*},$\\
iii)When $\Delta^{*}=-1, |z|=\delta^{*}, \text{ and } \Re(\mu^{*})>\frac{1}{2}.$\\
Case(III): When contour (L) is starting from $\gamma-i\infty$ and ending at $\gamma+i\infty$, where $\gamma\in\mathbb{R}=(-\infty,+\infty)$, then ${_p}\Psi_{q}$ is also convergent under the following conditions\\
i) When $\sigma^{*}>0, |\arg(-z)|<\frac{\pi}{2}\sigma^{*}, 0<|z|<\infty, z\neq0,$\\
ii) When $\sigma^{*}=0, \arg(-z)=0, 0<|z|<\infty, z\neq0 \text{ such that } -\gamma\Delta^{*}+\Re(\mu^{*})>\frac{1}{2}+\gamma,$\\
iii)When $\gamma=0,\sigma^{*}=0,\arg(-z)=0, 0<|z|<\infty, z\neq0 \text{ such that }, \Re(\mu^{*})>\frac{1}{2}.$\\
Next we collect some results that we will need in the sequel.\\

\vskip.1cm
{\bf Identity 1.} Let $$ \alpha=\exp{\left( \frac{2 \pi i}{5} \right)}, i=\sqrt{(-1)}$$ and $r$ being non-negative integer, then
\begin{eqnarray}
1+ \alpha^{2r} + \alpha^{4r} + \alpha^{6r} + \alpha^{8r} = \begin{cases}
\begin{array}{c}
5;\\
~\\
0;
\end{array} & \begin{array}{c}
r \in \{0,5,10,15,20,\dots\}\\
~\\
r \in \{1,2,3,4,6,7,8,9,11,12,13,14,\dots\}.
\end{array}\end{cases}
\end{eqnarray}

\vskip.1cm

{\bf Identity 2.} Let $$ \alpha=\exp{\left( \frac{2 \pi i}{5} \right)}, i=\sqrt{(-1)}$$ and $r$ being non-negative integer, then
\begin{eqnarray}
&&1+ \alpha^{2r+1} + \alpha^{4r+2} + \alpha^{6r+3} + \alpha^{8r+4}\nonumber\\
&&\hskip15mm=\begin{cases}
\begin{array}{c}
5;\\
~\\
0;
\end{array} & \begin{array}{c}
r \in \{2,7,12,17, \dots \}\\
~\\
r \in \{ 0,1,3,4,5,6,8,9,10,11,13,14,15,16,\dots \}.
\end{array}\end{cases}
\end{eqnarray}
Above identities can be verified with the help of De Moivre's theorem and some trigonometrical identities.\\

{\bf Gauss multiplication formula.} Let m being positive integer and n being non-negative integer, then
\begin{eqnarray}\label{h.inteq(1.7)}
(b)_{mn}= m^{mn}\displaystyle\prod_{j=1}^{m} \Big(\frac{b+j-1}{m}\Big)_{n}.
\end{eqnarray}

Now, we are recalling some functions in the hypergeometric notations \cite[pp. 71, 115]{Rainville} (see also, \cite{Gradshteyn}), which we will use in the applications.

\begin{table}[h!]
\centering

\caption{Some elementary functions and its hypergeometric representations}
\vskip.1cm
\label{table-1.2}
\begin{tabular}{|c  |c  |c  |c|c|c|c|c|c|}
\hline
Ser. No. & Notation  & Hypergeometric Representation \\ [1pt]\hline

1 & $\arcsin(x)$ & $\arcsin(x)= x\,{_{2}F_{1}}\left[\begin{array}{r} \frac{1}{2},\frac{1}{2};\\~\\ \frac{3}{2};\end{array} x^2  \right]$  \\ [15pt]\hline

2 & $\arctan(x)$ &  $\arctan(x) = x\,{_{2}F_{1}}\left[\begin{array}{r} 1, \frac{1}{2};\\~\\ \frac{3}{2};\end{array} -x^{2} \right]$\\ [15pt]\hline

3 & $\sin(x)$ &  $\sin(x) =  x\,{_{0}F_{1}}\left[\begin{array}{r} -;\\~\\ \frac{3}{2};\end{array} -\frac{x^{2}}{4} \right]$\\ [15pt]\hline

4 & $(\arcsin(x) )^2$ &  $(\arcsin(x) )^2 = x^2 \,{_{3}F_{2}}\left[\begin{array}{r} 1,1,1;\\~\\ 2, \frac{3}{2};\end{array} x^{2} \right]$\\ [15pt]\hline

5 & $\cos(x)$ &  $\cos(x) = {_{0}F_{1}}\left[\begin{array}{r} -;\\~\\ \frac{1}{2};\end{array} -\frac{x^{2}}{4} \right]$\\ [15pt]\hline

5 & $\left( \frac{2}{1+\sqrt{(1-x^2)}} \right)^{2\gamma-1}$ & $\left( \frac{2}{1+\sqrt{(1-x^2)}} \right)^{2\gamma-1}= {_{2}F_{1}}\left[\begin{array}{r} \gamma,\gamma-\frac{1}{2};\\~\\ 2\gamma;\end{array} x^2  \right]$\\ [15pt]\hline

\end{tabular}
\end{table}

\begin{table}[h!]
\centering

\caption{Some special functions and its hypergeometric representations}
\vskip.1cm
\label{table-1.2}
\begin{tabular}{|c  |c  |c  |c|c|c|c|c|c|}
\hline
Ser. No. & Notation  & Hypergeometric Representation   \\ [1pt]\hline
1 & Complete elliptic integral of first kind: $\textbf{K}(x)$ & $\textbf{K}(x)=\frac{\pi}{2} \,{_{2}F_{1}}\left[\begin{array}{r} \frac{1}{2}, \frac{1}{2};\\~\\ 1;\end{array} x^{2} \right]$  \\ [5pt]\hline

2 & Complete elliptic integral of second kind: $\textbf{E}(x)$ & $\textbf{E}(x)=\frac{\pi}{2} \,{_{2}F_{1}}\left[\begin{array}{r} -\frac{1}{2}, \frac{1}{2};\\~\\ 1;\end{array} x^{2} \right]$  \\ [5pt]\hline

3 & Error function or Probability integral: $\erf(x)$ & $\erf(x)=\frac{2x}{\sqrt(\pi)} \,{_{1}F_{1}}\left[\begin{array}{r}  \frac{1}{2};\\~\\ \frac{3}{2};\end{array} -x^{2} \right]$  \\ [5pt]\hline

4 & Incomplete gamma function:  $\gamma(a,x)$ & $\gamma(a,x^2)=\frac{x^{2a}}{a} \,{_{1}F_{1}}\left[\begin{array}{r}  a;\\~\\ 1+a;\end{array} -x^2 \right]$  \\ [5pt]\hline

 5& Dilogarithm function:  ${Li}_2(x)$ & ${Li}_2(x^2) = x^2 \,{_{3}F_{2}}\left[\begin{array}{r} 1,1,1;\\~\\ 2, 2;\end{array} x^2 \right]$  \\ [5pt]\hline
\end{tabular}
\end{table}

\section{General Series Identities}
\begin{theorem}\label{h.thm2.1}
Suppose $\{\phi(r) \}_{r=0}^{\infty}$ is a bounded sequence of arbitrary real and complex numbers and  $$ \alpha=\exp{\left( \frac{2 \pi i}{5} \right)}, i=\sqrt{(-1)}$$ then
\begin{eqnarray}\label{h.eq(2.1)}
&&\sum_{r=0}^{\infty}\phi(r) \frac{c^r (x)^{2r}}{r!} + \sum_{r=0}^{\infty}\phi(r) \frac{c^r (x\alpha)^{2r}}{r!} + \sum_{r=0}^{\infty}\phi(r) \frac{c^r (x\alpha^2)^{2r}}{r!} + \sum_{r=0}^{\infty}\phi(r) \frac{c^r (x\alpha^3)^{2r}}{r!} \nonumber\\
&& \hskip20mm + \sum_{r=0}^{\infty}\phi(r) \frac{c^r (x\alpha^4)^{2r}}{r!}=5 \sum_{r=0}^{\infty}\phi(5r) \frac{c^{5r} x^{10r}}{ (5r)!},
\end{eqnarray}
provided that each of the series involved is absolutely convergent.
\end{theorem}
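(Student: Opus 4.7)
The plan is to combine the five series on the left-hand side term by term, factor out the common piece $\phi(r)c^r x^{2r}/r!$, and then use Identity 1 to collapse the resulting sum of fifth roots of unity to a single indicator.

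First, I would justify interchanging the (finite) sum over the five copies with the infinite sum in $r$: since each of the five series is assumed absolutely convergent, grouping the five terms with index $r$ together is legitimate and the resulting combined series is again absolutely convergent. After this regrouping the left-hand side becomes
\begin{equation*}
\sum_{r=0}^{\infty}\phi(r)\,\frac{c^{r}x^{2r}}{r!}\Bigl(1+\alpha^{2r}+\alpha^{4r}+\alpha^{6r}+\alpha^{8r}\Bigr),
\end{equation*}
because the factor $(x\alpha^{k})^{2r}=x^{2r}\alpha^{2rk}$ depends on $k$ only through the power of $\alpha$.

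Next I would invoke Identity 1, which states that the bracketed factor equals $5$ when $r$ is a nonnegative multiple of $5$ and vanishes otherwise. Consequently only the terms with $r=5s$, $s\in\mathbb{N}_{0}$, survive, each multiplied by $5$. Reindexing with $r=5s$ and noting that $x^{2r}=x^{10s}$ and $c^{r}=c^{5s}$ gives exactly
\begin{equation*}
5\sum_{s=0}^{\infty}\phi(5s)\,\frac{c^{5s}x^{10s}}{(5s)!},
\end{equation*}
which is the right-hand side of \eqref{h.eq(2.1)} after renaming $s\mapsto r$.

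There is no genuine obstacle here; the only subtlety is ensuring that the interchange of summation orders is legitimate, and this is precisely what the hypothesis of absolute convergence guarantees. The boundedness of $\{\phi(r)\}$ plays no role in the formal manipulation but would typically be invoked in the applications to secure absolute convergence of each individual series on the left.
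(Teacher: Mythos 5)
Your proposal is correct and follows essentially the same route as the paper: combine the five series into one, factor out $\phi(r)c^{r}x^{2r}/r!$ to isolate the sum $1+\alpha^{2r}+\alpha^{4r}+\alpha^{6r}+\alpha^{8r}$, and apply Identity 1 so that only the indices $r\equiv 0 \pmod 5$ survive. Your explicit remark that absolute convergence justifies the regrouping is a small but welcome addition that the paper leaves implicit.
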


\begin{proof} Suppose LHS of equation \eqref{h.eq(2.1)} is denoted by $S$, then
\begin{eqnarray}\label{h.eq(2.2)}
&&S=\sum_{r=0}^{\infty}\phi(r) \frac{c^r x^{2r}}{r!} \{1+ \alpha^{2r} + \alpha^{4r} + \alpha^{6r} +  \alpha^{8r} \}.
\end{eqnarray}

Now, we apply Identity 1 in equation \eqref{h.eq(2.2)}, we get
\begin{eqnarray}\label{h.eq(2.3)}
&&S=5\, \phi(0) \frac{c^{0} x^{0}}{ (0)!} +  5\,\phi(5) \frac{c^{5} x^{10}}{ (5)!} + 5\,\phi(10) \frac{c^{10} x^{20}}{ (10)!}
+ 5\,\phi(15) \frac{c^{15} x^{30}}{ (15)!}\, + \dots\nonumber\\
&&\hskip20mm  =5 \sum_{r=0}^{\infty}\phi(5r) \frac{c^{5r} x^{10r}}{ (5r)!}.
\end{eqnarray}

\end{proof}

\begin{theorem}\label{h.thm2.2}
Suppose $\{\phi(r) \}_{r=0}^{\infty}$ is a bounded sequence of arbitrary real and complex numbers and  $$ \alpha=\exp{\left( \frac{2 \pi i}{5} \right)}, i=\sqrt{(-1)}$$ then
\begin{eqnarray}\label{h.eq(2.4)}
&&\sum_{r=0}^{\infty}\phi(r) \frac{c^r (x)^{2r}}{r!} + \alpha \sum_{r=0}^{\infty}\phi(r) \frac{c^r (x\alpha)^{2r}}{r!} + \alpha^2 \sum_{r=0}^{\infty}\phi(r) \frac{c^r (x\alpha^2)^{2r}}{r!} +  \alpha^3 \sum_{r=0}^{\infty}\phi(r) \frac{c^r (x\alpha^3)^{2r}}{r!} \nonumber\\
&&\hskip20mm + \alpha^4 \sum_{r=0}^{\infty}\phi(r) \frac{c^r (x\alpha^4)^{2r}}{r!} =5 \sum_{r=0}^{\infty}\phi(5r+2) \frac{c^{(5r+2)} x^{(10r+4)}}{ (5r+2)!},
\end{eqnarray}
provided that each of the series involved is absolutely convergent.
\end{theorem}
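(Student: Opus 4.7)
The plan is to mimic the proof of Theorem \ref{h.thm2.1}, but with Identity 2 in place of Identity 1. The key observation is that on the left-hand side the $j$-th summand carries an extra factor of $\alpha^{j}$ in front, and combining this with $(x\alpha^{j})^{2r}=x^{2r}\alpha^{2jr}$ produces the exponent pattern $2jr+j$ that exactly matches the summands $\alpha^{2r+1},\alpha^{4r+2},\alpha^{6r+3},\alpha^{8r+4}$ appearing in Identity 2.

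First, I would denote the left-hand side by $S$ and use absolute convergence of the five series to interchange summation, collecting terms of matching index $r$:
\begin{equation*}
S=\sum_{r=0}^{\infty}\phi(r)\,\frac{c^{r}x^{2r}}{r!}\,\bigl\{1+\alpha^{2r+1}+\alpha^{4r+2}+\alpha^{6r+3}+\alpha^{8r+4}\bigr\}.
\end{equation*}
Next I would invoke Identity 2 on the bracketed factor: it equals $5$ exactly when $r\in\{2,7,12,17,\dots\}$, i.e.\ when $r=5k+2$ for some $k\in\mathbb{N}_{0}$, and vanishes otherwise. Substituting this back, only the terms with $r=5k+2$ survive and each contributes a factor $5$.

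Finally, I would re-index by writing $r=5k+2$, which replaces $\phi(r)$ by $\phi(5k+2)$, $c^{r}$ by $c^{5k+2}$, $x^{2r}$ by $x^{10k+4}$, and $r!$ by $(5k+2)!$, yielding
\begin{equation*}
S=5\sum_{k=0}^{\infty}\phi(5k+2)\,\frac{c^{5k+2}\,x^{10k+4}}{(5k+2)!},
\end{equation*}
which is the desired right-hand side (with $k$ renamed to $r$). There is essentially no obstacle: the entire argument reduces to the bookkeeping step of recognizing that the prefactors $\alpha^{j}$ shift the exponents in precisely the way prescribed by Identity 2, and the only subtlety worth stating explicitly is the absolute-convergence hypothesis that licenses the term-by-term regrouping.
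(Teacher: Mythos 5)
Your proposal is correct and follows essentially the same route as the paper's own proof: write the left-hand side as a single series with the bracketed factor $1+\alpha^{2r+1}+\alpha^{4r+2}+\alpha^{6r+3}+\alpha^{8r+4}$, apply Identity 2, and re-index over the surviving terms $r\equiv 2 \pmod 5$. The only cosmetic difference is that you substitute $r=5k+2$ directly, whereas the paper first writes the sum with index $5r-3$ starting at $r=1$ and then shifts $r\mapsto r+1$.
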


\begin{proof}
Suppose LHS of equation \eqref{h.eq(2.4)} is denoted by $T$, then
\begin{eqnarray}\label{h.eq(2.5)}
T=\sum_{r=0}^{\infty}\phi(r) \frac{c^r x^{2r}}{r!} \{1+ \alpha^{2r+1} + \alpha^{4r+2} + \alpha^{6r+3} +  \alpha^{8r+4} \}.
\end{eqnarray}

Now, we apply Identity 2 in equation \eqref{h.eq(2.5)}, we get
\begin{eqnarray}\label{h.eq(2.6)}
&&T=5\, \phi(2) \frac{c^{2} x^{4}}{ (2)!} +  5\,\phi(7) \frac{c^{7} x^{14}}{ (7)!} + 5\,\phi(12) \frac{c^{12} x^{24}}{ (12)!}
+ 5\,\phi(17) \frac{c^{17} x^{34}}{ (17)!}\, + \dots\nonumber\\
&&\hskip20mm
=5 \sum_{r=1}^{\infty}\phi(5r-3) \frac{c^{(5r-3)} x^{(10r-6)}}{ (5r-3)!}.
\end{eqnarray}

Replacing $r$ by $r+1$ in equation \eqref{h.eq(2.6)}, we get
\begin{eqnarray}\label{h.eq(2.7)}
T=5 \sum_{r=0}^{\infty}\phi(5r+2) \frac{c^{(5r+2)} x^{(10r+4)}}{ (5r+2)!}.
\end{eqnarray}
\end{proof}
\section{Hypergeometric Representations }
Any values of parameters and variables leading to the results, which do not make sense, are tacitly excluded.
\begin{theorem}\label{h.thm3.1}
Following sum of Fox-Wright hypergeometric functions with different arguments holds true:
\begin{eqnarray}
&&{_p\Psi_q}\left[\begin{array}{r} (a_{1},A_1),\,(a_{2},A_2),\,\dots\, ,(a_{p},A_p);\\~\\ (b_{1},B_1),\,(b_{2},B_2),\,\dots\, ,(b_{q},B_q);\end{array}  c(x)^2 \right]\nonumber\\
&&+  {_p\Psi_q}\left[\begin{array}{r} (a_{1},A_1),\,(a_{2},A_2),\,\dots\, ,(a_{p},A_p);\\~\\ (b_{1},B_1),\,(b_{2},B_2),\,\dots\, ,(b_{q},B_q);\end{array}  c(x \alpha)^2 \right]\nonumber\\
&&+  {_p\Psi_q}\left[\begin{array}{r} (a_{1},A_1),\,(a_{2},A_2),\,\dots\, ,(a_{p},A_p);\\~\\ (b_{1},B_1),\,(b_{2},B_2),\,\dots\, ,(b_{q},B_q);\end{array}  c(x \alpha^2)^2 \right]\nonumber
\end{eqnarray}
\begin{eqnarray}\label{h.eq(3.1)}
&&+  {_p\Psi_q}\left[\begin{array}{r} (a_{1},A_1),\,(a_{2},A_2),\,\dots\, ,(a_{p},A_p);\\~\\ (b_{1},B_1),\,(b_{2},B_2),\,\dots\, ,(b_{q},B_q);\end{array}  c(x \alpha^3)^2 \right]\nonumber\\
&&+   {_p\Psi_q}\left[\begin{array}{r} (a_{1},A_1),\,(a_{2},A_2),\,\dots\, ,(a_{p},A_p);\\~\\ (b_{1},B_1),\,(b_{2},B_2),\,\dots\, ,(b_{q},B_q);\end{array}  c(x \alpha^4)^2 \right]\nonumber\\
&&= 5\, \displaystyle\prod_{i=1}^{4} \Gamma {\left(\frac{i}{5}\right)} {_p\Psi_{q+4}}\left[\begin{array}{r} (a_{1},5A_1),\,(a_{2},5A_2),\,\dots\, ,(a_{p},5A_p);\\~\\ \left(\frac{1}{5}, 1\right), \left(\frac{2}{5}, 1\right), \left(\frac{3}{5}, 1\right), \left(\frac{4}{5}, 1\right), (b_{1},5B_1),\,(b_{2},5B_2),\,\dots\, ,(b_{q},5B_q);\end{array} \left( \frac{cx^2}{5}\right)^5 \right],\nonumber\\
\end{eqnarray}
where $\alpha=\exp{\left( \frac{2 \pi i}{5} \right)}.$
\end{theorem}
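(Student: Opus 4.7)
The plan is to reduce Theorem 3.1 directly to Theorem 2.1 by choosing an appropriate bounded sequence, and then to rewrite the factorial $(5r)!$ appearing on the right-hand side of Theorem 2.1 using the Gauss multiplication formula so that it matches the displayed Fox--Wright function of parametric order $(p,q+4)$.

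First, I would set
\begin{equation}
\phi(r)=\frac{\Gamma(a_{1}+A_{1}r)\cdots\Gamma(a_{p}+A_{p}r)}{\Gamma(b_{1}+B_{1}r)\cdots\Gamma(b_{q}+B_{q}r)}.\nonumber
\end{equation}
With this choice, the defining series \eqref{h.eq(1.4)} of the Fox--Wright function shows that the $k$-th summand on the left-hand side of \eqref{h.eq(3.1)} is exactly
\begin{equation}
\sum_{r=0}^{\infty}\phi(r)\,\frac{c^{r}(x\alpha^{k})^{2r}}{r!},\qquad k=0,1,2,3,4.\nonumber
\end{equation}
Hence the left-hand side of \eqref{h.eq(3.1)} is precisely the sum $S$ appearing in Theorem \ref{h.thm2.1}, and Theorem \ref{h.thm2.1} immediately collapses it to
\begin{equation}
5\sum_{r=0}^{\infty}\phi(5r)\,\frac{c^{5r}x^{10r}}{(5r)!}.\nonumber
\end{equation}

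Next I would apply the Gauss multiplication formula \eqref{h.inteq(1.7)} with $m=5$ and $b=1$, which yields $(5r)!=(1)_{5r}=5^{5r}(1/5)_{r}(2/5)_{r}(3/5)_{r}(4/5)_{r}\,r!$. Rewriting each Pochhammer symbol as $(j/5)_{r}=\Gamma(j/5+r)/\Gamma(j/5)$, I obtain
\begin{equation}
\frac{c^{5r}x^{10r}}{(5r)!}=\Bigl(\prod_{j=1}^{4}\Gamma(j/5)\Bigr)\,\frac{(cx^{2}/5)^{5r}}{r!\,\prod_{j=1}^{4}\Gamma(j/5+r)}.\nonumber
\end{equation}
Simultaneously, $\phi(5r)$ is the Gamma ratio with each $A_{i}$ replaced by $5A_{i}$ and each $B_{j}$ by $5B_{j}$. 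Substituting these two pieces into $5\sum_{r}\phi(5r)c^{5r}x^{10r}/(5r)!$ and pulling the constant $5\prod_{j=1}^{4}\Gamma(j/5)$ outside the summation produces the series on the right-hand side of \eqref{h.eq(3.1)}, by definition \eqref{h.eq(1.4)} applied to the parametric pairs $(a_{i},5A_{i})$ in the numerator and $(j/5,1)$ for $j=1,2,3,4$ together with $(b_{j},5B_{j})$ in the denominator, evaluated at $(cx^{2}/5)^{5}$.

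The only genuine hypothesis to verify is the absolute-convergence condition required by Theorem \ref{h.thm2.1}; this is where one must be a little careful. With the above choice of $\phi$, absolute convergence of each of the five series on the left-hand side amounts to the standard convergence conditions stated after \eqref{h.eq(1.7)} for the Fox--Wright function at argument $c x^{2}\alpha^{2k}$ (in particular $\Delta^{*}\ge -1$ together with the appropriate size restriction on $|cx^{2}|$, since $|\alpha|=1$). This is the only real obstacle, and it is handled by simply folding it into the hypothesis ``any values of parameters and variables leading to results which do not make sense are tacitly excluded'' stated at the beginning of the section; once it holds, the argument above goes through verbatim.
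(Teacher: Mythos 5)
Your proposal is correct and follows essentially the same route as the paper: the authors likewise set $\phi(r)=\prod_{j}\Gamma(a_{j}+A_{j}r)/\prod_{j}\Gamma(b_{j}+B_{j}r)$ in the identity \eqref{h.eq(2.1)} and invoke the definition \eqref{h.eq(1.4)}. Your write-up merely makes explicit the step the paper leaves implicit, namely the use of the Gauss multiplication formula \eqref{h.inteq(1.7)} to convert $(5r)!$ into $5^{5r}r!\prod_{j=1}^{4}(j/5)_{r}$ and hence into the four extra denominator pairs $(j/5,1)$ and the prefactor $\prod_{j=1}^{4}\Gamma(j/5)$.
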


\begin{theorem}\label{h.thm3.2}
Following sum of Fox-Wright hypergeometric functions with different arguments holds true:
\begin{eqnarray}\label{h.eq(3.4)}
&&{_p\Psi_q}\left[\begin{array}{r} (a_{1},A_1),\,(a_{2},A_2),\,\dots\, ,(a_{p},A_p);\\~\\ (b_{1},B_1),\,(b_{2},B_2),\,\dots\, ,(b_{q},B_q);\end{array}  cx^2 \right]\nonumber\\
&&+ (\alpha) {_p\Psi_q}\left[\begin{array}{r} (a_{1},A_1),\,(a_{2},A_2),\,\dots\, ,(a_{p},A_p);\\~\\ (b_{1},B_1),\,(b_{2},B_2),\,\dots\, ,(b_{q},B_q);\end{array}  c(x \alpha)^2 \right]\nonumber\\
&&+(\alpha^2)  {_p\Psi_q}\left[\begin{array}{r} (a_{1},A_1),\,(a_{2},A_2),\,\dots\, ,(a_{p},A_p);\\~\\ (b_{1},B_1),\,(b_{2},B_2),\,\dots\, ,(b_{q},B_q);\end{array}  c(x \alpha^2)^2 \right]\nonumber\\
&&+ (\alpha^3) {_p\Psi_q}\left[\begin{array}{r} (a_{1},A_1),\,(a_{2},A_2),\,\dots\, ,(a_{p},A_p);\\~\\ (b_{1},B_1),\,(b_{2},B_2),\,\dots\, ,(b_{q},B_q);\end{array}  c(x \alpha^3)^2 \right]\nonumber\\
&&+  (\alpha^4) {_p\Psi_q}\left[\begin{array}{r} (a_{1},A_1),\,(a_{2},A_2),\,\dots\, ,(a_{p},A_p);\\~\\ (b_{1},B_1),\,(b_{2},B_2),\,\dots\, ,(b_{q},B_q);\end{array}  c(x \alpha^4)^2 \right]\nonumber\\
&&= \frac{ 5c^2x^4 \Gamma{\left(\frac{3}{5}\right)} \Gamma{\left(\frac{4}{5}\right)} \Gamma{\left(\frac{6}{5}\right)} \Gamma{\left(\frac{7}{5}\right)}}{2}\nonumber\\
&&\times {_p\Psi_{q+4}}\left[\begin{array}{r}  (a_{1}+2A_1,5A_1),\\~\\ \left(\frac{3}{5}, 1\right), \left(\frac{4}{5}, 1\right), \left(\frac{6}{5},1\right), \left(\frac{7}{5}, 1\right), (b_{1}+2B_1,5B_1), \end{array}  \right. \nonumber\\
&&\left.\begin{array}{r}\,(a_{2}+2A_2,5A_2),\,\dots\, ,(a_{p}+2A_p,5A_p);\\~\\
\,(b_{2}+2B_2,5B_2),\,\dots\, ,(b_{q}+2B_q,5B_q);\end{array} \left( \frac{cx^2}{5}\right)^5 \right],
\end{eqnarray}
where $\alpha=\exp{\left( \frac{2 \pi i}{5} \right)}$.
\end{theorem}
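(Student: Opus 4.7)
The plan is to mirror the proof of Theorem~\ref{h.thm3.1}, replacing Theorem~\ref{h.thm2.1} with its twin Theorem~\ref{h.thm2.2} and applying the Gauss multiplication formula \eqref{h.inteq(1.7)} to $\Gamma(5r+3)$ rather than to $(5r)!$.

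First I would identify the common bounded coefficient sequence
$$\phi(r)=\frac{\prod_{i=1}^{p}\Gamma(a_{i}+A_{i}r)}{\prod_{j=1}^{q}\Gamma(b_{j}+B_{j}r)},$$
so that each of the five Fox--Wright functions appearing on the left-hand side of \eqref{h.eq(3.4)} is exactly $\sum_{r\ge 0}\phi(r)(cx^{2})^{r}/r!$ with $x$ replaced successively by $x\alpha^{k}$ for $k=0,1,2,3,4$. Under the tacitly assumed absolute convergence, Theorem~\ref{h.thm2.2} applied to this $\phi$ collapses the entire left-hand side of \eqref{h.eq(3.4)} to
$$5\sum_{r=0}^{\infty}\phi(5r+2)\,\frac{c^{5r+2}\,x^{10r+4}}{(5r+2)!}.$$

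Next, I would absorb the index shift directly into the Gamma arguments: $\Gamma(a_{i}+A_{i}(5r+2))=\Gamma((a_{i}+2A_{i})+5A_{i}r)$ and analogously for each $b_{j}$. This is precisely what converts the original pairs $(a_{i},A_{i})$, $(b_{j},B_{j})$ into the shifted pairs $(a_{i}+2A_{i},5A_{i})$, $(b_{j}+2B_{j},5B_{j})$ advertised on the right. The key computational step is then the treatment of the denominator factorial via $(5r+2)!=\Gamma(5r+3)=2\cdot(3)_{5r}$; invoking \eqref{h.inteq(1.7)} with $m=5$, $b=3$ gives
$$(3)_{5r}=5^{5r}(3/5)_{r}(4/5)_{r}(1)_{r}(6/5)_{r}(7/5)_{r}=5^{5r}\,r!\,(3/5)_{r}(4/5)_{r}(6/5)_{r}(7/5)_{r}.$$
Rewriting each Pochhammer as $(\lambda)_{r}=\Gamma(\lambda+r)/\Gamma(\lambda)$ produces the four new denominator Fox--Wright pairs $(3/5,1),(4/5,1),(6/5,1),(7/5,1)$, while the reciprocal constants $1/\bigl[\Gamma(3/5)\Gamma(4/5)\Gamma(6/5)\Gamma(7/5)\bigr]$ emerge out front. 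Pairing $c^{5r}x^{10r}$ with $5^{-5r}$ yields the target argument $((cx^{2}/5)^{5})^{r}$, and collecting the remaining prefactors $c^{2}x^{4}$, the leading $5$ delivered by Theorem~\ref{h.thm2.2}, the factor $1/\Gamma(3)=1/2$, and the four Gamma values produces the overall constant $5c^{2}x^{4}\Gamma(3/5)\Gamma(4/5)\Gamma(6/5)\Gamma(7/5)/2$ claimed in \eqref{h.eq(3.4)}.

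The only real obstacle is disciplined bookkeeping: ensuring that the power $5^{5r}$ emitted by the Gauss multiplication formula recombines cleanly with $c^{5r}x^{10r}$ to reproduce the argument $(cx^{2}/5)^{5}$, and that the four shifted-fifths Gamma functions land as denominator pairs rather than contaminating the constant. No deeper analytic input is required beyond the absolute convergence assumption, which legitimises both the regrouping used in Theorem~\ref{h.thm2.2} and the subsequent Gamma manipulations; the argument is structurally identical to that of Theorem~\ref{h.thm3.1}, with the extra $\Gamma(3)$ and the shift $+2$ accounting for every numerical discrepancy between the two statements.
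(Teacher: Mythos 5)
Your proof is correct and follows essentially the same route as the paper, whose own proof is a one‑liner (set $\phi(r)=\prod_{i=1}^{p}\Gamma(a_i+A_ir)\big/\prod_{j=1}^{q}\Gamma(b_j+B_jr)$ in identity \eqref{h.eq(2.4)} and invoke the definition of ${_p\Psi_q}$); you have merely supplied the bookkeeping the paper leaves implicit, namely the shift $\Gamma(a_i+A_i(5r+2))=\Gamma((a_i+2A_i)+5A_ir)$ and the Gauss multiplication of $(5r+2)!=2\,(3)_{5r}$. One cosmetic slip: since $1/(\lambda)_r=\Gamma(\lambda)/\Gamma(\lambda+r)$, it is the product $\Gamma(3/5)\Gamma(4/5)\Gamma(6/5)\Gamma(7/5)$ itself, not its reciprocal, that emerges out front --- which is what your final constant correctly shows.
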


\begin{proof} On setting $\phi(r) = \frac{\displaystyle\prod_{j=1}^{p}\Gamma{(a_{j}+A_{j}r)}}{\displaystyle\prod_{j=1}^{q}\Gamma{(b_{j}+B_{j}r)}}$ in general series identities \eqref{h.eq(2.1)} and \eqref{h.eq(2.4)}, and applying the definition of Fox-Wright hypergeometric function $_p\Psi_q$, we get equations \eqref{h.eq(3.1)}, \eqref{h.eq(3.4)} respectively.
\end{proof}

\begin{theorem}
Following sum of special case of Fox-Wright hypergeometric functions with different arguments holds true:
\begin{eqnarray}\label{h.eq(3.5)}
&&{_p\Psi_{q}^{*}}\left[\begin{array}{r} (a_{1},A_1),\,(a_{2},A_2),\,\dots\, ,(a_{p},A_p);\\~\\ (b_{1},B_1),\,(b_{2},B_2),\,\dots\, ,(b_{q},B_q);\end{array}  cx^2 \right]\nonumber\\
&&+  {_p\Psi_{q}^{*}}\left[\begin{array}{r} (a_{1},A_1),\,(a_{2},A_2),\,\dots\, ,(a_{p},A_p);\\~\\ (b_{1},B_1),\,(b_{2},B_2),\,\dots\, ,(b_{q},B_q);\end{array}  c(x \alpha)^2 \right]\nonumber\\
&&+  {_p\Psi_{q}^{*}}\left[\begin{array}{r} (a_{1},A_1),\,(a_{2},A_2),\,\dots\, ,(a_{p},A_p);\\~\\ (b_{1},B_1),\,(b_{2},B_2),\,\dots\, ,(b_{q},B_q);\end{array}  c(x \alpha^2)^2 \right]\nonumber\\
&&+  {_p\Psi_{q}^{*}}\left[\begin{array}{r} (a_{1},A_1),\,(a_{2},A_2),\,\dots\, ,(a_{p},A_p);\\~\\ (b_{1},B_1),\,(b_{2},B_2),\,\dots\, ,(b_{q},B_q);\end{array}  c(x \alpha^3)^2 \right]\nonumber\\
&&+   {_p\Psi_{q}^{*}}\left[\begin{array}{r} (a_{1},A_1),\,(a_{2},A_2),\,\dots\, ,(a_{p},A_p);\\~\\ (b_{1},B_1),\,(b_{2},B_2),\,\dots\, ,(b_{q},B_q);\end{array}  c(x \alpha^4)^2 \right]\nonumber\\
&&= 5\,{_p\Psi_{q+4}^{*}}\left[\begin{array}{r} (a_{1},5A_1),\,(a_{2},5A_2),\,\dots\, ,(a_{p},5A_p);\\~\\ \left(\frac{1}{5}, 1\right), \left(\frac{2}{5}, 1\right), \left(\frac{3}{5}, 1\right), \left(\frac{4}{5}, 1\right), (b_{1},5B_1),\,(b_{2},5B_2),\,\dots\, ,(b_{q},5B_q);\end{array} \left( \frac{cx^2}{5}\right)^5 \right],\nonumber\\
\end{eqnarray}
where $\alpha=\exp{\left( \frac{2 \pi i}{5} \right)}.$
\end{theorem}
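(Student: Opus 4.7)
The plan is to reduce the identity directly to Theorem~\ref{h.thm2.1} by choosing the bounded sequence $\phi$ appropriately. Specifically, I would set
\[
\phi(r)=\frac{\prod_{j=1}^{p}(a_{j})_{rA_{j}}}{\prod_{j=1}^{q}(b_{j})_{rB_{j}}},
\]
so that $\sum_{r\ge 0}\phi(r)\,\dfrac{c^{r}x^{2r}}{r!}$ is precisely ${_p\Psi_{q}^{*}}$ evaluated at $cx^{2}$, in view of \eqref{h.eq(1.4)}--\eqref{h.eq(1.5)}. Then each of the five summands on the left-hand side of \eqref{h.eq(3.5)} is one of the five sums in \eqref{h.eq(2.1)}, with $x$ successively replaced by $x\alpha^{k}$, $k=0,1,2,3,4$. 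Applying Theorem~\ref{h.thm2.1} collapses the whole left-hand side to
\[
5\sum_{r=0}^{\infty}\frac{\prod_{j=1}^{p}(a_{j})_{5rA_{j}}}{\prod_{j=1}^{q}(b_{j})_{5rB_{j}}}\,\frac{c^{5r}x^{10r}}{(5r)!}.
\]

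The next step is to rewrite $(5r)!=(1)_{5r}$ via the Gauss multiplication formula \eqref{h.inteq(1.7)} with $b=1$, $m=5$, which yields
\[
(5r)!=5^{5r}\,r!\,\Bigl(\tfrac{1}{5}\Bigr)_{r}\Bigl(\tfrac{2}{5}\Bigr)_{r}\Bigl(\tfrac{3}{5}\Bigr)_{r}\Bigl(\tfrac{4}{5}\Bigr)_{r}.
\]
Substituting this in and using the identity $c^{5r}x^{10r}/5^{5r}=\bigl((cx^{2}/5)^{5}\bigr)^{r}$, the right-hand side of the displayed equation rearranges to
\[
5\sum_{r=0}^{\infty}\frac{\prod_{j=1}^{p}(a_{j})_{5rA_{j}}}{\bigl(\tfrac{1}{5}\bigr)_{r}\bigl(\tfrac{2}{5}\bigr)_{r}\bigl(\tfrac{3}{5}\bigr)_{r}\bigl(\tfrac{4}{5}\bigr)_{r}\prod_{j=1}^{q}(b_{j})_{5rB_{j}}}\,\frac{\bigl((cx^{2}/5)^{5}\bigr)^{r}}{r!},
\]
which, by the very definition of ${_p\Psi_{q+4}^{*}}$, is exactly the right-hand side of \eqref{h.eq(3.5)}.

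The argument is essentially bookkeeping; there is no substantive analytic obstacle. The only points requiring care are (i) that the Pochhammer indices $rA_{j}\mapsto 5rA_{j}$ correspond to the Fox-Wright coefficient shift $A_{j}\mapsto 5A_{j}$ in the target function (and similarly for the $B_{j}$), and (ii) that the factor $5^{-5r}$ produced by the Gauss multiplication formula combines cleanly with $c^{5r}x^{10r}$ to recover the argument $(cx^{2}/5)^{5}$. As a consistency check, this statement is equivalent to Theorem~\ref{h.thm3.1} via the normalization relation \eqref{h.eq(1.5)}: dividing both sides of \eqref{h.eq(3.1)} by $\prod_{j}\Gamma(a_{j})/\prod_{j}\Gamma(b_{j})$ and absorbing the prefactor $\prod_{i=1}^{4}\Gamma(i/5)$ into the new denominator parameters $(i/5,1)$ on the right exactly eliminates the $\Gamma$-prefactor, leaving the starred version \eqref{h.eq(3.5)}.
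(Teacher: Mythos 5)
Your proposal is correct and follows essentially the same route as the paper: the paper's proof consists precisely of setting $\phi(r)=\prod_{j=1}^{p}(a_j)_{rA_j}\big/\prod_{j=1}^{q}(b_j)_{rB_j}$ in the general series identity \eqref{h.eq(2.1)} and invoking the definition of ${_p\Psi_q^{*}}$. You have merely made explicit the bookkeeping step the paper leaves implicit, namely the use of the Gauss multiplication formula $(5r)!=5^{5r}\,r!\,\bigl(\tfrac{1}{5}\bigr)_r\bigl(\tfrac{2}{5}\bigr)_r\bigl(\tfrac{3}{5}\bigr)_r\bigl(\tfrac{4}{5}\bigr)_r$ to produce the four extra denominator parameters and the rescaled argument $(cx^2/5)^5$.
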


\begin{theorem}
Following sum of special case of Fox-Wright hypergeometric functions with different arguments holds true:
\begin{eqnarray}
&&{_p\Psi_{q}^{*}}\left[\begin{array}{r} (a_{1},A_1),\,(a_{2},A_2),\,\dots\, ,(a_{p},A_p);\\~\\ (b_{1},B_1),\,(b_{2},B_2),\,\dots\, ,(b_{q},B_q);\end{array}  cx^2 \right]\nonumber\\
&&+ (\alpha) {_p\Psi_{q}^{*}}\left[\begin{array}{r} (a_{1},A_1),\,(a_{2},A_2),\,\dots\, ,(a_{p},A_p);\\~\\ (b_{1},B_1),\,(b_{2},B_2),\,\dots\, ,(b_{q},B_q);\end{array}  c(x \alpha)^2 \right]\nonumber
\end{eqnarray}
\begin{eqnarray}\label{h.eq(3.6)}
&&+(\alpha^2)  {_p\Psi_{q}^{*}}\left[\begin{array}{r} (a_{1},A_1),\,(a_{2},A_2),\,\dots\, ,(a_{p},A_p);\\~\\ (b_{1},B_1),\,(b_{2},B_2),\,\dots\, ,(b_{q},B_q);\end{array}  c(x \alpha^2)^2 \right]\nonumber\\
&&+ (\alpha^3) {_p\Psi_{q}^{*}}\left[\begin{array}{r} (a_{1},A_1),\,(a_{2},A_2),\,\dots\, ,(a_{p},A_p);\\~\\ (b_{1},B_1),\,(b_{2},B_2),\,\dots\, ,(b_{q},B_q);\end{array}  c(x \alpha^3)^2 \right]\nonumber\\
&&+  (\alpha^4) {_p\Psi_{q}^{*}}\left[\begin{array}{r} (a_{1},A_1),\,(a_{2},A_2),\,\dots\, ,(a_{p},A_p);\\~\\ (b_{1},B_1),\,(b_{2},B_2),\,\dots\, ,(b_{q},B_q);\end{array}  c(x \alpha^4)^2 \right]\nonumber\\
&&= \frac{(a_1)_{2A_1}\dots(a_p)_{2A_p}}{(b_1)_{2B_1}\dots(b_q)_{2B_q}} \frac{5c^2 x^4}{2}\nonumber\\
&&\times {_p\Psi_{q+4}^{*}}\left[\begin{array}{r}  (a_{1}+2A_1,5A_1),\\~\\ \left(\frac{3}{5}, 1\right), \left(\frac{4}{5}, 1\right), \left(\frac{6}{5},1\right), \left(\frac{7}{5}, 1\right), (b_{1}+2B_1,5B_1),\end{array}  \right. \nonumber\\
&&\left.\begin{array}{r}\,(a_{2}+2A_2,5A_2),\,\dots\, ,(a_{p}+2A_p,5A_p);\\~\\
\,(b_{2}+2B_2,5B_2),\,\dots\, ,(b_{q}+2B_q,5B_q);\end{array} \left( \frac{cx^2}{5}\right)^5 \right],
\end{eqnarray}
where $\alpha=\exp{\left( \frac{2 \pi i}{5} \right)}.$
\end{theorem}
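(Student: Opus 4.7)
The plan is to specialize equation \eqref{h.eq(2.4)} of Theorem \ref{h.thm2.2} with the choice
$$\phi(r) = \frac{(a_1)_{A_1 r}\cdots(a_p)_{A_p r}}{(b_1)_{B_1 r}\cdots(b_q)_{B_q r}},$$
which by \eqref{h.eq(1.4)}--\eqref{h.eq(1.5)} is precisely the coefficient sequence whose generating series is the starred Fox--Wright function ${_p\Psi_q^*}$. Under this choice each of the five inner sums appearing on the left-hand side of \eqref{h.eq(2.4)} becomes ${_p\Psi_q^*}\bigl[\ldots\,;\,c(x\alpha^k)^2\bigr]$ for $k=0,1,2,3,4$, so together with the $\alpha^k$ weights already carried by \eqref{h.eq(2.4)} this reproduces the LHS of \eqref{h.eq(3.6)} verbatim.

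The remaining task is to recast the right-hand side
$$5\sum_{r=0}^{\infty}\phi(5r+2)\,\frac{c^{5r+2}\,x^{10r+4}}{(5r+2)!}$$
as the displayed ${_p\Psi_{q+4}^*}$. I would first apply the elementary shift $(a)_{A(5r+2)} = (a)_{2A}\,(a+2A)_{5Ar}$ inside every Pochhammer factor of $\phi(5r+2)$; this extracts the $r$-independent coefficient $\prod(a_j)_{2A_j}/\prod(b_j)_{2B_j}$ in front of the series and simultaneously shifts the parameters to $a_j+2A_j$ and $b_j+2B_j$, while replacing the coefficients $A_j, B_j$ by $5A_j, 5B_j$. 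Second, I would treat the factorial using the Gauss multiplication formula \eqref{h.inteq(1.7)} applied to $(3)_{5r}$:
$$(5r+2)! \;=\; 2\cdot(3)_{5r} \;=\; 2\cdot 5^{5r}\,\left(\tfrac{3}{5}\right)_r\left(\tfrac{4}{5}\right)_r (1)_r\left(\tfrac{6}{5}\right)_r\left(\tfrac{7}{5}\right)_r.$$
Here $(1)_r = r!$ supplies the Fox--Wright normalization $1/n!$, the factor $5^{5r}$ absorbs into $c^{5r}x^{10r}$ to produce the argument $(cx^2/5)^5$, and the four surviving shifted-factorial symbols $(3/5)_r,(4/5)_r,(6/5)_r,(7/5)_r$ appear as the four extra denominator parameter pairs $(3/5,1),(4/5,1),(6/5,1),(7/5,1)$ of ${_p\Psi_{q+4}^*}$.

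Collecting the overall constant $5c^2x^4/2$ together with the extracted ratio $\prod(a_j)_{2A_j}/\prod(b_j)_{2B_j}$ then reproduces exactly the right-hand side of \eqref{h.eq(3.6)}. No step is genuinely deep; the only bookkeeping point requiring care is the five-term decomposition of $(5r+2)!$ via \eqref{h.inteq(1.7)} and the correct identification of $(1)_r$ with the $r!$ demanded by the Fox--Wright summation variable, so that precisely four (rather than five) auxiliary denominator parameters survive. Absolute convergence of all series carries over directly from the hypothesis of Theorem \ref{h.thm2.2}.
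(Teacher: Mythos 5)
Your proposal is correct and follows exactly the paper's route: the paper proves this theorem by setting $\phi(r) = \prod_j (a_j)_{rA_j}/\prod_j (b_j)_{rB_j}$ in the general series identity \eqref{h.eq(2.4)} and invoking the definition of ${_p\Psi_q^*}$, which is precisely what you do. The additional bookkeeping you supply --- the shift $(a)_{A(5r+2)}=(a)_{2A}(a+2A)_{5Ar}$, the factorization $(5r+2)!=2\,(3)_{5r}$, and the Gauss multiplication step with $(1)_r=r!$ absorbing the Fox--Wright normalization --- is accurate and simply makes explicit what the paper leaves implicit.
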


\begin{proof} On setting $\phi(r) = \frac{ (a_1)_{rA_1} (a_2)_{rA_2}\dots(a_p)_{rA_p}}{(b_1)_{rB_1} (b_2)_{rB_2}\dots (b_q)_{rB_q}}$ in general series identities \eqref{h.eq(2.1)} and \eqref{h.eq(2.4)}, and applying the definition of Fox-Wright hypergeometric function $_p\Psi^{*}_q$, we get equations \eqref{h.eq(3.5)}, \eqref{h.eq(3.6)} respectively.
\end{proof}

\begin{theorem}\label{h.thm3.8}
Following sum of generalized hypergeometric functions with different arguments holds true:
\begin{eqnarray}\label{h.eq(3.7)}
&&{_pF_{q}}\left[\begin{array}{r} (a_{p});\\~\\ (b_{q});\end{array}  cx^2 \right]
+  {_pF_{q}}\left[\begin{array}{r} (a_{p});\\~\\ (b_{q});\end{array}  c(x\alpha)^2 \right]+
 {_pF_{q}}\left[\begin{array}{r} (a_{p});\\~\\ (b_{q});\end{array}  c(x\alpha^2)^2 \right]\nonumber\\
&&+  {_pF_{q}}\left[\begin{array}{r} (a_{p});\\~\\ (b_{q});\end{array}  c(x\alpha^3)^2 \right]
+ {_pF_{q}}\left[\begin{array}{r} (a_{p});\\~\\ (b_{q});\end{array}  c(x\alpha^4)^2 \right]\nonumber\\
&&= 5\,{_{5p}F_{5q+4}}\left[\begin{array}{r} \frac{(a_p)}{5},\,\frac{1+(a_p)}{5},\,\frac{2+(a_p)}{5},\,\frac{3+(a_p)}{5},\, \frac{4+(a_p)}{5};\\~\\ \frac{1}{5},\frac{2}{5},\frac{3}{5},\frac{4}{5}, \frac{(b_q)}{5},\,\frac{1+(b_q)}{5},\,\frac{2+(b_q)}{5},\,\frac{3+(b_q)}{5},\, \frac{4+(b_q)}{5};\end{array} \left( \frac{cx^2}{5^{(1+q-p)}}\right)^5 \right],\nonumber\\
\end{eqnarray}
where $\alpha=\exp{\left( \frac{2 \pi i}{5} \right)}; 5p\leq5q+4, \Big|\left( \frac{cx^2}{5^{(1+q-p)}}\right)^5\Big|<\infty;  p=q+1, \Big|\left( \frac{cx^2}{5^{(1+q-p)}}\right)^5\Big|<1.$
\end{theorem}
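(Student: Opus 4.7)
The plan is to specialize the general series identity \eqref{h.eq(2.1)} of Theorem \ref{h.thm2.1} by choosing the bounded sequence
\[
\phi(r)=\frac{(a_{1})_{r}(a_{2})_{r}\cdots(a_{p})_{r}}{(b_{1})_{r}(b_{2})_{r}\cdots(b_{q})_{r}}.
\]
With this $\phi$, each of the five sums on the LHS of \eqref{h.eq(2.1)} is, by the very definition \eqref{h.eq(1.2)}, the generalized hypergeometric function $_pF_q$ evaluated at $c(x\alpha^{k})^{2}$ for $k=0,1,2,3,4$, which is exactly the LHS of \eqref{h.eq(3.7)}. So the entire task is to rewrite the right-hand sum $5\sum_{r=0}^{\infty}\phi(5r)\,c^{5r}x^{10r}/(5r)!$ as the claimed $_{5p}F_{5q+4}$.

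To do this, I would apply the Gauss multiplication formula \eqref{h.inteq(1.7)} with $m=5$ to every Pochhammer symbol that has subscript $5r$. For each numerator and denominator parameter $\lambda$ this yields
\[
(\lambda)_{5r}=5^{5r}\,\Bigl(\tfrac{\lambda}{5}\Bigr)_{r}\Bigl(\tfrac{\lambda+1}{5}\Bigr)_{r}\Bigl(\tfrac{\lambda+2}{5}\Bigr)_{r}\Bigl(\tfrac{\lambda+3}{5}\Bigr)_{r}\Bigl(\tfrac{\lambda+4}{5}\Bigr)_{r},
\]
and in particular $(5r)!=(1)_{5r}=5^{5r}\bigl(\tfrac15\bigr)_{r}\bigl(\tfrac25\bigr)_{r}\bigl(\tfrac35\bigr)_{r}\bigl(\tfrac45\bigr)_{r}\,r!$ upon using $(5/5)_r=(1)_r=r!$. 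Splitting each $(a_{i})_{5r}$ this way produces the $5p$ numerator parameters $\tfrac{(a_{p})}{5},\tfrac{1+(a_{p})}{5},\dots,\tfrac{4+(a_{p})}{5}$, splitting each $(b_{j})_{5r}$ produces the $5q$ denominator parameters, and the $(5r)!$ contributes the four extra denominator parameters $\tfrac{1}{5},\tfrac{2}{5},\tfrac{3}{5},\tfrac{4}{5}$ together with the required $r!$ in the denominator of the series.

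Finally, it remains only to balance the powers of $5$: the numerator contributes $5^{5pr}$, while the combined denominator (from $(b_{j})_{5r}$ and $(5r)!$) contributes $5^{5qr}\cdot 5^{5r}=5^{5(q+1)r}$, so the net prefactor is $5^{5(p-q-1)r}$. Absorbing this into $c^{5r}x^{10r}$ yields precisely $\bigl(cx^{2}/5^{1+q-p}\bigr)^{5r}$, which is the argument displayed in \eqref{h.eq(3.7)}. The convergence side-conditions stated in the theorem ($5p\le 5q+4$; or $p=q+1$ with the indicated bound on $|cx^{2}/5^{1+q-p}|^{5}$) are exactly the standard convergence conditions of \eqref{h.eq(1.2)} applied to the resulting $_{5p}F_{5q+4}$, and they also guarantee the absolute convergence hypothesis needed to invoke Theorem \ref{h.thm2.1} in the first place.

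The proof is therefore essentially mechanical once Theorem \ref{h.thm2.1} and the Gauss multiplication identity are in hand; the only real obstacle is the careful bookkeeping of the product over $j=1,\dots,5$ in the multiplication formula and the tallying of factors of $5^{5r}$, to ensure that after cancellation the argument collapses to the single compact form $\bigl(cx^{2}/5^{1+q-p}\bigr)^{5}$. No new analytic idea is needed beyond those already developed in Sections 1 and 2.
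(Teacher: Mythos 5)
Your proposal is correct and follows essentially the same route as the paper: the authors likewise set $\phi(r)=\prod_{i}(a_i)_r\big/\prod_{j}(b_j)_r$ in the identity \eqref{h.eq(2.1)} and then invoke the definition \eqref{h.eq(1.2)}, leaving the Gauss multiplication step \eqref{h.inteq(1.7)} and the $5^{5(p-q-1)r}$ bookkeeping implicit. You have simply written out in full the details the paper omits, and your tally of the parameters and of the argument $\left(cx^2/5^{1+q-p}\right)^5$ checks out.
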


\begin{theorem}\label{h.thm3.4}
Following sum of generalized hypergeometric functions with different arguments holds true:
\begin{eqnarray}
&& {_pF_{q}}\left[\begin{array}{r} (a_{p});\\~\\ (b_{q});\end{array}  cx^2 \right]
+ (\alpha) {_pF_{q}}\left[\begin{array}{r} (a_{p});\\~\\ (b_{q});\end{array}  c(x\alpha)^2 \right]+
 (\alpha^2){_pF_{q}}\left[\begin{array}{r} (a_{p});\\~\\ (b_{q});\end{array}  c(x\alpha^2)^2 \right]\nonumber
\end{eqnarray}
\begin{eqnarray}\label{h.eq(3.8)}
&&+  (\alpha^3){_pF_{q}}\left[\begin{array}{r} (a_{p});\\~\\ (b_{q});\end{array}  c(x\alpha^3)^2 \right]
+ (\alpha^4){_pF_{q}}\left[\begin{array}{r} (a_{p});\\~\\ (b_{q});\end{array}  c(x\alpha^4)^2 \right]\nonumber\\
&&= \frac{5c^2 x^4}{2} \frac{\prod_{i=1}^{p} (a_i)_2}{\prod_{i=1}^{q} (b_i)_2}  \,{_{5p}F_{5q+4}}\left[\begin{array}{r} \frac{2+(a_p)}{5},\,\frac{3+(a_p)}{5},\,\frac{4+(a_p)}{5},\,\frac{5+(a_p)}{5},\, \frac{6+(a_p)}{5};\\~\\ \frac{3}{5},\frac{4}{5},\frac{6}{5},\frac{7}{5}, \frac{2+(b_q)}{5},\,\frac{3+(b_q)}{5},\,\frac{4+(b_q)}{5},\, \frac{5+(b_q)}{5},\,\frac{6+(b_q)}{5};\end{array} \left( \frac{cx^2}{5^{(1+q-p)}}\right)^5 \right],\nonumber\\
\end{eqnarray}
where $\alpha=\exp{\left( \frac{2 \pi i}{5} \right)}; 5p\leq5q+4, \Big|\left( \frac{cx^2}{5^{(1+q-p)}}\right)^5\Big|<\infty;  p=q+1, \Big|\left( \frac{cx^2}{5^{(1+q-p)}}\right)^5\Big|<1.$
\end{theorem}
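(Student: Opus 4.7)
The plan is to specialize Theorem \ref{h.thm2.2} by choosing the bounded sequence
\[
\phi(r)=\frac{(a_{1})_{r}(a_{2})_{r}\cdots(a_{p})_{r}}{(b_{1})_{r}(b_{2})_{r}\cdots(b_{q})_{r}},
\]
so that, by the definition \eqref{h.eq(1.2)}, each inner sum on the left of \eqref{h.eq(2.4)} is exactly a ${_{p}F_{q}}$ evaluated at the appropriate argument $c(x\alpha^{k})^{2}$. The left-hand side of \eqref{h.eq(3.8)} then appears immediately. The work therefore lies in rewriting the right-hand side
\[
5\sum_{r=0}^{\infty}\frac{(a_{1})_{5r+2}\cdots(a_{p})_{5r+2}}{(b_{1})_{5r+2}\cdots(b_{q})_{5r+2}}\frac{c^{5r+2}x^{10r+4}}{(5r+2)!}
\]
as a single ${_{5p}F_{5q+4}}$ series.

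The next step is to peel off the $r=0$ prefactor from each Pochhammer symbol via $(\lambda)_{5r+2}=(\lambda)_{2}(\lambda+2)_{5r}$, which extracts the leading coefficient
\[
\frac{5\,c^{2}x^{4}}{2}\,\frac{\prod_{i=1}^{p}(a_{i})_{2}}{\prod_{i=1}^{q}(b_{i})_{2}},
\]
using also $(5r+2)!=2\cdot(3)_{5r}$. What remains is to convert every $(\lambda+2)_{5r}$ (and the denominator factor $(3)_{5r}$) into five Pochhammer symbols of index $r$ using the Gauss multiplication formula \eqref{h.inteq(1.7)}. For the numerator parameters this produces the five sequences $\tfrac{2+a_{i}}{5},\tfrac{3+a_{i}}{5},\tfrac{4+a_{i}}{5},\tfrac{5+a_{i}}{5},\tfrac{6+a_{i}}{5}$, and analogously for the $b_{j}$'s. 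For the factor $(3)_{5r}$ in the denominator, the five resulting pieces are $\tfrac{3}{5},\tfrac{4}{5},\tfrac{5}{5},\tfrac{6}{5},\tfrac{7}{5}$; the term $\left(\tfrac{5}{5}\right)_{r}=(1)_{r}=r!$ absorbs the standard $r!$ of the ${_{P}F_{Q}}$-series definition, while the remaining four parameters $\tfrac{3}{5},\tfrac{4}{5},\tfrac{6}{5},\tfrac{7}{5}$ join the denominator list, yielding exactly the $5q+4$ denominator parameters displayed in \eqref{h.eq(3.8)}.

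Finally, I would collect the powers of $5$: each numerator $(a_{i}+2)_{5r}$ contributes $5^{5r}$, each denominator $(b_{j}+2)_{5r}$ and $(3)_{5r}$ contribute $5^{-5r}$ (after being moved to the denominator of the series), giving a net factor of $5^{5r(p-q-1)}$. Combining this with $c^{5r}x^{10r}=(cx^{2})^{5r}$ produces the argument $\bigl(cx^{2}/5^{1+q-p}\bigr)^{5}$ appearing in \eqref{h.eq(3.8)}. The convergence restrictions on this argument follow from the standard convergence conditions (i)--(ii) stated after \eqref{h.eq(1.3)} applied to the resulting ${_{5p}F_{5q+4}}$.

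The main obstacle I anticipate is purely bookkeeping: keeping the indices of the Gauss multiplication shifts straight across the $p$ numerator and $q$ denominator parameters simultaneously, and in particular noticing the cancellation $(1)_{r}=r!$ that reduces five extra denominator parameters from $(3)_{5r}$ to the four parameters $\tfrac{3}{5},\tfrac{4}{5},\tfrac{6}{5},\tfrac{7}{5}$. Once this is handled, the identity falls out directly from Theorem \ref{h.thm2.2} and the multiplication formula, exactly parallel to the proof of Theorem \ref{h.thm3.2} but with the extra $\alpha^{k}$ weights forcing the $r\mapsto 5r+2$ shift instead of $r\mapsto 5r$.
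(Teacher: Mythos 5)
Your proposal is correct and follows exactly the route of the paper: the paper's own proof is the single sentence ``set $\phi(r)=\prod_i(a_i)_r/\prod_j(b_j)_r$ in \eqref{h.eq(2.4)} and apply the definition of ${}_pF_q$,'' and your expansion of the bookkeeping --- peeling off $(\lambda)_{5r+2}=(\lambda)_2(\lambda+2)_{5r}$, using $(5r+2)!=2\,(3)_{5r}$, applying the Gauss multiplication formula \eqref{h.inteq(1.7)}, and cancelling $(1)_r=r!$ to leave the four parameters $\tfrac{3}{5},\tfrac{4}{5},\tfrac{6}{5},\tfrac{7}{5}$ --- is precisely what that sentence suppresses. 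All the details you supply (including the net power $5^{-5r(1+q-p)}$ giving the argument $(cx^2/5^{1+q-p})^5$) check out.
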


\begin{proof} On setting $\phi(r) = \frac{ (a_1)_{r} (a_2)_{r}\dots(a_p)_{r}}{(b_1)_{r} (b_2)_{r}\dots (b_q)_{r}}$ in general series identities \eqref{h.eq(2.1)} and \eqref{h.eq(2.4)}, and applying the definition of generalized hypergeometric function $_pF_q$, we get equations \eqref{h.eq(3.7)}, \eqref{h.eq(3.8)} respectively.
\end{proof}

\section{Applications}
As the direct application of our theorems 7 \& 8, we obtain following results on the sum of special functions and elementary functions with different arguments for $\alpha=\exp{\left( \frac{2 \pi i}{5} \right)}$:\\

On setting $p=0, q=1, c=\frac{-1}{4}$ and $b_1 = \frac{3}{2}$ in equation \eqref{h.eq(3.7)}, we obtain
\begin{eqnarray}\label{h.eq(4.1)}
&&\alpha^4\sin x + \alpha^3\sin (x\alpha) + \alpha^2\sin (x\alpha^2) + \alpha \sin (x\alpha^3) + \sin (x\alpha^4)\nonumber\\
&&\hskip20mm=5x\alpha^4\,{_{0}F_{9}}\left[\begin{array}{r} \text{\bf --------------------------};\\~\\ \frac{1}{5},\frac{2}{5},\frac{3}{5},\frac{4}{5},\frac{3}{10},\frac{5}{10},\frac{7}{10},\frac{9}{10},\frac{11}{10};\end{array} -\left( \frac{x}{10}\right)^{10} \right].
\end{eqnarray}
On setting $p=0, q=1, c=\frac{-1}{4}$ and $b_1 = \frac{3}{2}$ in equation \eqref{h.eq(3.8)}, we obtain
\begin{eqnarray}\label{h.eq(4.2)}
&&{\sin x} + \sin (x\alpha) + \sin (x\alpha^2) + \sin (x\alpha^3) + \sin (x\alpha^4)\nonumber\\
&&\hskip20mm= \frac{x^5}{24}\,\, {_{0}F_{9}}\left[\begin{array}{r} \text{\bf --------------------------};\\~\\ \frac{3}{5},\frac{4}{5},\frac{6}{5},\frac{7}{5},\frac{7}{10},\frac{9}{10},\frac{11}{10},\frac{13}{10},\frac{15}{10};\end{array} -\left( \frac{x}{10}\right)^{10} \right].
\end{eqnarray}
On setting $p=0, q=1, c=\frac{-1}{4}$ and $b_1 = \frac{1}{2}$ in equation \eqref{h.eq(3.7)}, we obtain
\begin{eqnarray}\label{h.eq(4.1)}
&&{\cos x} + \cos (x\alpha) + \cos (x\alpha^2) + \cos (x\alpha^3) + \cos (x\alpha^4)\nonumber\\
&&\hskip20mm =5\,{_{0}F_{9}}\left[\begin{array}{r} \text{\bf --------------------------};\\~\\ \frac{1}{5},\frac{2}{5},\frac{3}{5},\frac{4}{5},\frac{1}{10},\frac{3}{10},\frac{5}{10},\frac{7}{10},\frac{9}{10};\end{array} -\left( \frac{x}{10}\right)^{10} \right].
\end{eqnarray}
On setting $p=0, q=1, c=\frac{-1}{4}$ and $b_1 = \frac{1}{2}$ in equation \eqref{h.eq(3.8)}, we obtain
\begin{eqnarray}\label{h.eq(4.2)}
&&{\cos x} + (\alpha) \cos (x\alpha) + (\alpha^2)\cos (x\alpha^2) + (\alpha^3) \cos(\alpha^3) + (\alpha^4) \cos (\alpha^4)\nonumber\\
&&\hskip20mm = \frac{5x^4}{24}\,\, {_{0}F_{9}}\left[\begin{array}{r} \text{\bf --------------------------};\\~\\ \frac{3}{5},\frac{4}{5},\frac{6}{5},\frac{7}{5},\frac{5}{10},\frac{7}{10},\frac{9}{10},\frac{11}{10},\frac{13}{10};\end{array} -\left( \frac{x}{10}\right)^{10} \right].
\end{eqnarray}
On setting $p=2, q=1, c=-1$ and $a_1= \frac{1}{2},  a_2 =1,  b_1 = \frac{3}{2}$ in equation \eqref{h.eq(3.7)}, we obtain
\begin{eqnarray}\label{h.eq(4.5)}
&& \alpha^4 {\arctan x} + \alpha^3 {\arctan (x\alpha)} + \alpha^2{\arctan (x\alpha^2)} + \alpha {\arctan (x\alpha^3)} + {\arctan(x\alpha^4)}\nonumber\\
&&\hskip20mm=5x\alpha^4\,{_{2}F_{1}}\left[\begin{array}{r} 1, \frac{1}{10};\\~\\ \frac{11}{10};\end{array} -x^{10} \right].
\end{eqnarray}
On setting $p=2, q=1, c=-1$ and $a_1= \frac{1}{2},  a_2 =1,  b_1 = \frac{3}{2}$ in equation \eqref{h.eq(3.8)}, we obtain
\begin{eqnarray}\label{h.eq(4.6)}
&& {\arctan x} +  {\arctan (x\alpha)} + {\arctan (x\alpha^2)} + {\arctan (x\alpha^3)} + {\arctan (x\alpha^4)}\nonumber\\
&&\hskip20mm=x^5 \,{_{2}F_{1}}\left[\begin{array}{r} 1, \frac{1}{2};\\~\\ \frac{3}{2};\end{array} -x^{10} \right]={\arctan (x^5)}.
\end{eqnarray}
On setting $p=2, q=1, c=1$ and $a_1= \frac{1}{2},  a_2 =-\frac{1}{2},  b_1 = 1$ in equation \eqref{h.eq(3.7)}, we obtain
\begin{eqnarray}\label{h.eq(4.7)}
&&\textbf{E}(x) + \textbf{E}(x\alpha) + \textbf{E}(x\alpha^2) + \textbf{E}(x\alpha^3) + \textbf{E}(x\alpha^4)\nonumber\\
&&\hskip20mm= \left(\frac{5 \pi}{2}\right)\,{_{10}F_{9}}\left[\begin{array}{r} -\frac{1}{10},\frac{1}{10},\frac{1}{10},
 \frac{3}{10}, \frac{3}{10}, \frac{5}{10}, \frac{5}{10},\frac{7}{10}, \frac{7}{10}, \frac{9}{10};\\~\\ \frac{1}{5},\frac{1}{5},\frac{2}{5},\frac{2}{5},\frac{3}{5},\frac{3}{5},\frac{4}{5},\frac{4}{5},1;\end{array}  x^{10} \right].
\end{eqnarray}
On setting $p=2, q=1, c=1$ and $a_1= \frac{1}{2},  a_2 =-\frac{1}{2},  b_1 = 1$ in equation \eqref{h.eq(3.8)}, we obtain
\begin{eqnarray}\label{h.eq(4.8)}
&& \textbf{E}(x) + \alpha \textbf{E}(x\alpha) +\alpha^2 \textbf{E}(x\alpha^2) + \alpha^3 \textbf{E}(x\alpha^3) + \alpha^4 \textbf{E}(x\alpha^4)\nonumber\\
&&\hskip20mm= \left(-\frac{15x^4 \pi}{128}\right)\,{_{10}F_{9}}\left[\begin{array}{r}  \frac{3}{10}, \frac{5}{10},
\frac{15}{10}, \frac{7}{10}, \frac{7}{10}, \frac{9}{10}, \frac{9}{10}, \frac{11}{10},
\frac{11}{10}, \frac{13}{10};\\~\\
\frac{3}{5},\frac{3}{5},\frac{4}{5},\frac{4}{5},\frac{6}{5},\frac{6}{5},\frac{7}{5},\frac{7}{5},1;\end{array}  x^{10} \right].
\end{eqnarray}

On setting $p=1, q=1, c=-1$ and $a_1= \frac{1}{2}, b_1 = \frac{3}{2}$ in equation \eqref{h.eq(3.7)}, we obtain
\begin{eqnarray}\label{h.eq(4.9)}
&&\alpha^{4}\erf{(x)} + \alpha^{3}\erf{(x\alpha)} + \alpha^{2}\erf{(x\alpha^2)} + \alpha \erf{(x\alpha^3)} + \erf{(x\alpha^4)}\nonumber\\
&&\hskip20mm= \left(\frac{10x\alpha^{4}}{\sqrt{(\pi)}}\right)\,{_{1}F_{5}}\left[\begin{array}{r} \frac{1}{10};\\~\\
\frac{11}{10},\frac{1}{5},\frac{2}{5},\frac{3}{5},\frac{4}{5};\end{array}  -\left(\frac{x^{2}}{5}\right)^5 \right].
\end{eqnarray}
On setting $p=1, q=1, c=-1$ and $a_1= \frac{1}{2}, b_1 = \frac{3}{2}$ in equation \eqref{h.eq(3.8)}, we obtain
\begin{eqnarray}\label{h.eq(4.10)}
&& \erf{(x)} + \erf{(x\alpha)} + \erf{(x\alpha^2)} +  \erf{(x\alpha^3)} + \erf{(x\alpha^4)}\nonumber\\
&&\hskip20mm= \left(\frac{x^5}{\sqrt{(\pi)}}\right)\,{_{1}F_{5}}\left[\begin{array}{r} \frac{1}{2};\\~\\
\frac{3}{2},\frac{3}{5},\frac{4}{5},\frac{6}{5},\frac{7}{5};\end{array}  -\left(\frac{x^{2}}{5}\right)^5 \right].
\end{eqnarray}

On setting $p=3, q=2, c=1$ and $a_1= a_2= a_3=1$, $b_1 =2, b_2= \frac{3}{2}$ in equation \eqref{h.eq(3.7)}, we obtain
\begin{eqnarray}\label{h.eq(4.11)}
&&\alpha^8 (\arcsin x)^2 + \alpha^6 (\arcsin (x\alpha))^2 + \alpha^4 (\arcsin (x\alpha^2))^2 + \alpha^2 (\arcsin (x\alpha^3))^2 + (\arcsin (x\alpha^4))^2\nonumber\\
&&\hskip20mm=5x^2\alpha^8\,{_{7}F_{6}}\left[\begin{array}{r} \frac{1}{5},\frac{1}{5},\frac{2}{5},\frac{3}{5},\frac{4}{5},1,1;\\~\\ \frac{6}{5},\frac{3}{10},\frac{1}{2},\frac{7}{10},\frac{9}{10},\frac{11}{10};\end{array} x^{10} \right].
\end{eqnarray}
On setting $p=3, q=2, c=1$ and $a_1= a_2= a_3=1$, $b_1 =2, b_2= \frac{3}{2}$ in equation \eqref{h.eq(3.8)}, we obtain
\begin{eqnarray}\label{h.eq(4.12)}
&&\alpha^4 (\arcsin x)^2 + \alpha^3 (\arcsin (x\alpha))^2 + \alpha^2 (\arcsin (x\alpha^2))^2 + \alpha (\arcsin (x\alpha^3))^2 + (\arcsin (x\alpha^4))^2\nonumber\\
&&\hskip20mm=\frac{8x^6\alpha^4}{9}\,{_{7}F_{6}}\left[\begin{array}{r} \frac{3}{5},\frac{3}{5},\frac{4}{5},\frac{6}{5},\frac{7}{5},1,1;\\~\\ \frac{8}{5},\frac{7}{10},\frac{9}{10},\frac{11}{10},\frac{13}{10},\frac{3}{2};\end{array} x^{10} \right].
\end{eqnarray}

On setting $p=2, q=1, c=1$ and $a_1= a_2 =\frac{1}{2},  b_1 = 1$ in equation \eqref{h.eq(3.7)}, we obtain
\begin{eqnarray}\label{h.eq(4.13)}
&&\textbf{K}(x) + \textbf{K}(x\alpha) + \textbf{K}(x\alpha^2) + \textbf{K}(x\alpha^3) + \textbf{K}(x\alpha^4)\nonumber\\
&&\hskip20mm= \left(\frac{5 \pi}{2}\right)\,{_{10}F_{9}}\left[\begin{array}{r} \frac{1}{10},\frac{1}{10}, \frac{3}{10}, \frac{3}{10}, \frac{1}{2}, \frac{1}{2},\frac{7}{10}, \frac{7}{10}, \frac{9}{10}, \frac{9}{10};\\~\\ \frac{1}{5},\frac{1}{5},\frac{2}{5},\frac{2}{5},\frac{3}{5},\frac{3}{5},\frac{4}{5},\frac{4}{5},1;\end{array}  x^{10} \right].
\end{eqnarray}
On setting $p=2, q=1, c=1$ and $a_1= a_2=\frac{1}{2}, b_1 = 1$ in equation \eqref{h.eq(3.8)}, we obtain
\begin{eqnarray}\label{h.eq(4.14)}
&& \textbf{K}(x) + \alpha \textbf{K}(x\alpha) +\alpha^2 \textbf{K}(x\alpha^2) + \alpha^3 \textbf{K}(x\alpha^3) + \alpha^4 \textbf{K}(x\alpha^4)\nonumber\\
&&\hskip20mm= \left(\frac{45 x^4 \pi}{128}\right)\,{_{10}F_{9}}\left[\begin{array}{r} \frac{1}{2},\frac{1}{2},\frac{7}{10}, \frac{7}{10}, \frac{9}{10}, \frac{9}{10}, \frac{11}{10}, \frac{11}{10},\frac{13}{10}, \frac{13}{10};\\~\\ \frac{3}{5},\frac{3}{5},\frac{4}{5},\frac{4}{5},\frac{6}{5},\frac{6}{5},\frac{7}{5},\frac{7}{5},1;\end{array}  x^{10} \right].
\end{eqnarray}

On setting $p=3, q=2, c=1$ and $a_1= a_2 =a_3=1,  b_1 =b_2= 2$ in equation \eqref{h.eq(3.7)}, we obtain
\begin{eqnarray}\label{h.eq(4.15)}
&&\alpha^8Li_2(x^2) + \alpha^6Li_2((x\alpha)^2) + \alpha^4Li_2((x\alpha^2)^2) + \alpha^2Li_2((x\alpha^3)^2) + Li_2((x\alpha^4)^2)\nonumber\\
&&\hskip20mm= \left(5x^2\alpha^8\right)\,{_{3}F_{2}}\left[\begin{array}{r} \frac{1}{5},\frac{1}{5}, 1;\\~\\ \frac{6}{5},\frac{6}{5};\end{array}  x^{10} \right].
\end{eqnarray}
On setting $p=3, q=2, c=1$ and $a_1= a_2 =a_3=1,  b_1 =b_2= 2$ in equation \eqref{h.eq(3.8)}, we obtain
\begin{eqnarray}\label{h.eq(4.16)}
&&\alpha^4Li_2(x^2) + \alpha^3Li_2((x\alpha)^2) + \alpha^2Li_2((x\alpha^2)^2) + \alpha Li_2((x\alpha^3)^2) + Li_2((x\alpha^4)^2)\nonumber\\
&&\hskip20mm= \left(\frac{5x^6\alpha^4}{9}\right)\,{_{3}F_{2}}\left[\begin{array}{r} \frac{3}{5},\frac{3}{5}, 1;\\~\\ \frac{8}{5},\frac{8}{5};\end{array}  x^{10} \right].
\end{eqnarray}

On setting $p=1, q=1, c=-1$ and $a_1= a, b_1 = 1+a$ in equation \eqref{h.eq(3.7)}, we obtain
\begin{eqnarray}\label{h.eq(4.17)}
&& \alpha^{8a}\gamma(a,x^2) + \alpha^{6a}\gamma(a,(x\alpha)^2) +\alpha^{4a}\gamma(a,(x\alpha^2)^2) + \alpha^{2a}\gamma(a,(x\alpha^3)^2) + \gamma(a,(x\alpha^4)^2)\nonumber\\
&&\hskip20mm= \left(\frac{5 x^{2a}\alpha^{8a}}{a}\right)\,{_{1}F_{5}}\left[\begin{array}{r} \frac{a}{5};\\~\\ \frac{a+5}{5},\frac{1}{5},\frac{2}{5},\frac{3}{5},\frac{4}{5};\end{array}  -\left(\frac{x^2}{5}\right)^5 \right].
\end{eqnarray}

On setting $p=1, q=1, c=-1$ and $a_1= a, b_1 = 1+a$ in equation \eqref{h.eq(3.8)}, we obtain
\begin{eqnarray}\label{h.eq(4.18)}
&& \alpha^{8a}\gamma(a,x^2) + \alpha^{6a+1}\gamma(a,(x\alpha)^2) +\alpha^{4a+2}\gamma(a,(x\alpha^2)^2) + \alpha^{2a+3}\gamma(a,(x\alpha^3)^2) +\alpha^4 \gamma(a,(x\alpha^4)^2)\nonumber\\
&&\hskip20mm= \left(\frac{5 x^{2a+4}\alpha^{8a}}{2(a+2)}\right)\,{_{1}F_{5}}\left[\begin{array}{r} \frac{a+2}{5};\\~\\ \frac{a+7}{5},\frac{3}{5},\frac{4}{5},\frac{6}{5},\frac{7}{5};\end{array}  -\left(\frac{x^2}{5}\right)^5 \right].
\end{eqnarray}

On setting $p=2, q=1, c=1$ and $a_1= \gamma, a_2= \gamma-\frac{1}{2}, b_1 = 2\gamma$ in equation \eqref{h.eq(3.7)}, we obtain
\begin{eqnarray}\label{h.eq(4.17)}
&&\left(\frac{2}{1+\sqrt{(1-(x)^2)}}\right)^{2\gamma-1} + \left(\frac{2}{1+\sqrt{(1-(x\alpha)^2)}}\right)^{2\gamma-1} + \left(\frac{2}{1+\sqrt{(1-(x\alpha^2)^2)}}\right)^{2\gamma-1}\nonumber\\
&&+ \left(\frac{2}{1+\sqrt{(1-(x\alpha^3)^2)}}\right)^{2\gamma-1} + \left(\frac{2}{1+\sqrt{(1-(x\alpha^4)^2)}}\right)^{2\gamma-1}\nonumber\\
&&\hskip5mm=5\,{_{10}F_{9}}\left[\begin{array}{r} \frac{\gamma}{5},\frac{\gamma+1}{5},\frac{\gamma+2}{5},\frac{\gamma+3}{5},\frac{\gamma+4}{5},\frac{2\gamma-1}{10},
\frac{2\gamma+1}{10},\frac{2\gamma+3}{10},\frac{2\gamma+5}{10},\frac{2\gamma+7}{10};\\~\\ \frac{1}{5},\frac{2}{5},\frac{3}{5},\frac{4}{5},\frac{2\gamma}{5},\frac{2\gamma+1}{5},\frac{2\gamma+2}{5},\frac{2\gamma+3}{5},\frac{2\gamma+4}{5};\end{array}  x^{10} \right].
\end{eqnarray}

On setting $p=2, q=1, c=1$ and $a_1= \gamma, a_2= \gamma-\frac{1}{2}, b_1 = 2\gamma$ in equation \eqref{h.eq(3.8)}, we obtain
\begin{eqnarray}\label{h.eq(4.17)}
&&\left(\frac{2}{1+\sqrt{(1-(x)^2)}}\right)^{2\gamma-1} + \alpha\left(\frac{2}{1+\sqrt{(1-(x\alpha)^2)}}\right)^{2\gamma-1} + \alpha^2\left(\frac{2}{1+\sqrt{(1-(x\alpha^2)^2)}}\right)^{2\gamma-1}\nonumber\\
&&+ \alpha^3 \left(\frac{2}{1+\sqrt{(1-(x\alpha^3)^2)}}\right)^{2\gamma-1} + \alpha^4 \left(\frac{2}{1+\sqrt{(1-(x\alpha^4)^2)}}\right)^{2\gamma-1}\nonumber\\
&&=\left(\frac{5 (\gamma)_2 (\gamma-\frac{1}{2})_2 \, x^4}{2 (2\gamma)_2}\right)\,{_{10}F_{9}}\left[\begin{array}{r} \frac{\gamma+2}{5},\frac{\gamma+3}{5},\frac{\gamma+4}{5},\frac{\gamma+5}{5},\frac{\gamma+6}{5},
\frac{2\gamma+3}{10},\frac{2\gamma+5}{10},\frac{2\gamma+7}{10},\frac{2\gamma+9}{10},
\frac{2\gamma+11}{10};\\~\\ \frac{3}{5},\frac{4}{5},\frac{6}{5},\frac{7}{5},
\frac{2\gamma+2}{5},\frac{2\gamma+3}{5},\frac{2\gamma+4}{5},\frac{2\gamma+5}{5},\frac{2\gamma+6}{5};\end{array}  x^{10} \right].\nonumber\\
\end{eqnarray}

\vskip.2cm
{\bf Remark:} Making suitable adjustments of parameters and variables in Theorems 3,4,5 and 6, we can derive some more results involving generalised Bessel functions $\Phi(\alpha, \beta; z) \text{ or } J^{\mu}_{\nu}(z)$, Mittag-Leffler functions $E_{\alpha}(z)$ and its generalizations $E_{\alpha, \beta}(z)$.
Since Wright's generalized function $_p\Psi_q$ of one variable is the particular case of Fox \textit{H}-function of one variable. Therefore, for more special cases of $_p\Psi_q$, we refer two monographs of Mathai-Saxena \cite{Mathai} and  Srivastava, Gupta and Goyal \cite{Srivastava1}.

\section{Conclusion}
Here, we have established some results on the sum of hypergeometric
functions with different arguments. We applied our results to Trigonometric functions, Elliptic integrals, Dilogarithmic function, Error function, and Incomplete gamma function.\\

One can also establish the some results on the sum of these special functions for example:
ordinary Bessel function J$_{\nu}$(x), modified Bessel function I$_{\nu}$(x),
complete elliptic integrals {\bf B}(x), {\bf C}(x), {\bf D}(x),
Lerch's transcendent $\Phi(x,q,a)$, Fresnel's integrals S(x), S$_1$(x),
S$_2$(x), C$^{*}$(x), C$_{1}$(x), C$_2$(x),
Sine integral S$_i$(x), hyperbolic sine integral Sh$_i$(x),
Polylogarithm function $Li_{q}$(x),
Sturve function H$_{\nu}$(x),
Modified Sturve functions {\bf L}$_{\nu}$(x), h$_{\mu,\nu}$(x),
Lommel function s$_{\mu,\nu}$(x),
Kelvin's functions $\text{ ber}(x),\text{ bei(x)}$,
Incomplete beta function B$_x(\alpha, \beta)$,
Hyperbessel function of Humbert J$_{m,n}$(x),
Modified hyperbessel function of Delerue I$_{m,n}$(x),
Arctangents function Ti$_2$(x), $\sinh(x), \cosh(x), \tanh^{-1}(x), \sinh^{-1}(x)$, \\ $\frac{\sinh^{-1}(x)}{\sqrt{\left(1+x^2\right)}}$, $\left(\sinh^{-1}x\right)^2$, $\sin{\left(a\sin^{-1}x\right)}$, $\cos{\left(a\sin^{-1}x\right)}$, $\log_{e}(1\pm x)$, $\frac{\arcsin(x)}{\sqrt{(1-x^2)}}$ and $\exp{(a\sin^{-1}x)}$ etc.

\end{document}